\def\@fnsymbol#1{\ensuremath{\ifcase#1\or \star\or\dagger\or \ddagger\or
   \mathsection\or \mathparagraph\or \|\or **\or \dagger\dagger
   \or \ddagger\ddagger \else\@ctrerr\fi}}
\title{On the First Fundamental Theorem \\
for $\GL_2(K)$ and $\SL_2(K)$}
\author{Hana Mel\'{a}nov\'{a}\footnote{Supported by Austrian Academy of Sciences, ÖAW, Doc Stipendium FA506081} \ and 
Sergey Yurkevich\footnote{Supported by Austrian Science Fund, FWF, Project AP31338}}
\DeclareMathOperator{\GL}{GL}
\DeclareMathOperator{\SL}{SL}
\newcommand{\NN}[0]{\mathbb{N}}
\newcommand{\CC}[0]{\mathbb{C}}
\newcommand{\xx}[0]{\mathbbm{x}}
\newcommand{\yy}[0]{\mathbbm{y}}
\begin{document}

\newtheorem{theorem}{Theorem}
\newtheorem*{Ftheorem}{First Fundamental Theorem for $\SL_2(K)$}
\newtheorem*{Gtheorem}{First Fundamental Theorem for $\GL_2(K)$}
\newtheorem{definition}[theorem]{Definition}
\newtheorem{lemma}[theorem]{Lemma}
\newtheorem{proposition}[theorem]{Proposition}
\newtheorem{corollary}[theorem]{Corollary}
\newtheorem{example}[theorem]{Example}
\newtheorem{remark}[theorem]{Remark}

\newpage
\maketitle

\begin{abstract}
    The First Fundamental Theorem of Invariant Theory describes a minimal generating set of the invariant polynomial ring under the action of some group $G$. In this note we give an elementary and direct proof for the $\GL_2(K)$ and $\SL_2(K)$ for any infinite field $K$. 
    %This case is of particular importance for the moduli space of $n$ points of the projective line \cite{HoMiSnVa09}.
    Our proof can be generalized to $\GL_m(K)$ and $\SL_m(K)$ for $m>2$. Moreover, we present a family of counter-examples to the statements of the First Fundamental Theorems for all finite fields and $m=2$.
\end{abstract}

\noindent Consider the polynomial ring of pairs of variables $K[\xx,\yy]\coloneqq K[x_1,y_1,\dots,x_n,y_n]$ over an infinite field $K$ and its quotient field $K(\xx,\yy)$. The general linear group $\GL_2(K)$ acts on $K[\xx,\yy]$, and thus also on $K(\xx,\yy)$, from the right by the usual matrix-vector multiplication on the pairs of variables $(x_i,y_i)$. We  denote the corresponding invariant field by $K(\xx,\yy)^{\GL_2}$. Clearly, no \textit{polynomial}, except for the constant ones, can be invariant under the action of $\GL_2(K)$, i.e.,  $K[\xx,\yy]^{\GL_2}=K.$ However, it is easy to construct polynomials that are invariant under the action of $\SL_2(K)$. Let us denote the corresponding invariant ring by $K[\xx,\yy]^{\SL_2}$.  We set 
\[
f_{i,j} \coloneqq x_iy_j - y_ix_j \in K[\xx,\yy], \quad 1 \leq i,j \leq n,
\]
and denote by $K[f_{ij}]$ the polynomial ring generated over $K$ by all $f_{i,j},1\leq i,j\leq n$ and by $K(f_{ij})$ its quotient field. Notice that these polynomials satisfy the straightforward equalities
\begin{align}\label{1}
f_{i,i}=0,\quad f_{j,i}=-f_{i,j},
\end{align}
and also the \emph{Plücker relation}
\begin{align}\label{2}
f_{i,j}f_{k,l}=f_{i,k}f_{j,l}-f_{i,l}f_{j,k},
\end{align}
for all $1\leq i,j,k,l\leq n.$ Moreover, it is obvious that each $f_{i,j}$ is \textit{semi-invariant} under the action of $\GL_2(K)$, i.e.,
\[
G\cdot f_{i,j}=\det(G) f_{i,j},
\]
for any $G\in\GL_2(K)$, and hence invariant under the action of $\SL_2(K)$, justifying the inclusions $K[\xx,\yy]^{\SL_2} \supseteq K[f_{ij}]$ and $K(\xx,\yy)^{\GL_2}\supseteq K\left(\frac{f_{i,j}}{f_{k,l}}:1\leq i,j,k,l\leq n\right)$. 

The goal of this note is to give a new, elementary and self-contained proof of the First Fundamental Theorems for $\SL_2(K)$ and $\GL_2(K)$:

\begin{Gtheorem}
An element $q\in K(\xx,\yy)$ is invariant under the action of $\GL_2(K)$ if and only if $q$ can be written as a rational function in $f_{i,j}/f_{k,l},1\leq i,j,k,l\leq n$, i.e., 
\[
K(\xx,\yy)^{\GL_2}=K\left(\frac{f_{i,j}}{f_{k,l}}:1\leq i,j,k,l\leq n\right).
\]
Moreover, an invariant $q$ admits the following representation in the generators $f_{i,j}/f_{k,l}$'s:
\[
q(x_i,y_i)=q\left(\frac{f_{1,i}}{f_{1,2}},\frac{f_{2,i}}{f_{1,2}}\right).
\]
\end{Gtheorem}

\begin{Ftheorem}
An element $p\in K[\xx,\yy]$ is invariant under the action of $\SL_2(K)$ if and only if $p$ can be written as a polynomial in $f_{i,j},1 \leq i,j \leq n$, i.e., $$K[\xx,\yy]^{\SL_2}=K[f_{ij}].$$
\end{Ftheorem}

\noindent\textbf{Historical information:}
The history of the First Fundamental Theorem (shortly FFT) is long and complex. Depending on the source, it is first attributed to Clebsch \cite{Clebsch1870}, Weyl \cite{Weyl39}, Hodge \cite{Ho43} or Igusa \cite{Ig54}. We shall explain the contribution of these authors as well as provide insight into more recent approaches.

Indeed, Richman \cite[p.44]{Richman89} recognized the oldest reference \cite[p.51]{Clebsch1870} from 1870, in which Clebsch considered $\GL_2(K)$ semi-invariant polynomials by working with the so-called \textit{Aronhold operator}. Note that this proof works for all fields $K$ of characteristic $0$. Some 30 years later, Grace and Young found an easier proof of Clebsch's theorem using the \textit{Cayley $\Omega$-operator} and compared it with the original one \cite[\S28, \S35]{GrYo04}. The first reference for the $\GL_n(K)$ semi-invariant polynomials for any $n \in \NN$ is \cite[\S 5]{Turnbull60} in which Turnbull generalized the preceding ideas of the $\Omega$-operator. Then Weyl made a first breakthrough in this area by employing \textit{Capelli identities} in \cite[Thm.~2.6.A]{Weyl39}. 

Soon after in 1953, Igusa \cite[Thm.~4]{Ig54} proved the FFT for $\SL_n(K)$, where $K$ is any \textit{universal domain}, by placing it in a completely different, geometric, setting. Embedding the invariant ring into the coordinate ring of a Grassmann variety and using tools from abstract algebraic geometry, he was the first one who showed the theorem for any algebraically closed field $K$.

The next major change in perspective was done by Doubilet, Rota and Stein in \cite[p.~200-202]{DoRoSt74} where the authors first introduced the combinatorial \textit{straightening lemma} (see below) and \textit{double tableaux}, and then proved the FFT for $\GL_n(K)$ and $\SL_n(K)$ in a different but equivalent setting for arbitrary infinite fields. It shall be noted that the straightening lemma (while not named like this) was already proven by Hodge \cite[p.~27]{Ho43}, who attributed ideas to Young \cite[Thm.~1]{Young1928}. Two years later, De Concini and Procesi noted that the paper \cite{DoRoSt74} had a gap and fixed it \cite[Thm.~1.2]{CoPr76}. 
%We also mention the paper \cite{CoEiPr80} in which the previous authors and Eisenbud provided more geometric insight in the approach. 
A decade later, Barnabei and Brini \cite{BaBr86} published an article with a more elementary proof, again for infinite fields, where they managed to avoid double tableaux. 

An even more recent and new proof for all infinite fields was found by Richman \cite[\S 3]{Richman89}, in which the author described a reduction to the case $n=2$. Then Richman showed that polynomial invariants under the action of $\SL_2(K)$ are equal to the ones under the action of the special upper triangular matrices. It turned out that the latter can be described easier.  

Moreover, we mention the paper \cite{StWh89} in which Sturmfels and White presented the straightening algorithm using reduction modulo Gröbner bases. In his book \cite[\S3.2]{Sturmfels08}, Sturmfels explained how this algorithm can be used to show the FFT for $\SL_n(\CC)$. Moreover, it turns out that the direct straightening algorithm approach is rather slow for practical computations and so in the same book the author gave another more efficient algorithm for the representation of $\SL_n(K)$ invariants in terms of the generators of the invariant ring.

Finally, a more recent proof is due to Kraft and Procesi \cite[\S 8]{KrPr96}, in which they deduce the FFT for $\SL_n(K)$ and $\GL_n(K)$ for infinite fields from a generalization of Weyl's Theorems. We also refer to \cite{Dolgachev03} for extended bibliographic notes and a well-written proof using Cayley's $\Omega$-operator.\\
%\cite{Ke93}

Let us start with the proof of the First Fundamental Theorem for $\GL_2(K)$:
\begin{proof}[Proof of the First Fundamental Theorem for $\GL_2(K)$]
Let $q\in K(\xx,\yy)$ be invariant under the action of $\GL_2(K)$. Then for any matrix
\[
G=\frac{1}{ad-bc}\begin{pmatrix}
a & b\\
c & d
\end{pmatrix}
\]
satisfying $ad-bc\neq 0$ we have the equality
\begin{align}\label{gl_eq}
q(x_i,y_i)=G\cdot q(x_i,y_i)=q\left(\frac{ax_i+by_i}{ad-bc},\frac{cx_i+dy_i}{ad-bc}\right).
\end{align}
Now, the equality $(\ref{gl_eq})$ holds on an Zariski open subset of $K^{2n+4}$, namely on all tuples $(x_1,y_1,\dots,x_n,y_n,a,b,c,d)$ for which $ad-bc\neq 0$ and the denominator of $q$ does not vanish. Moreover, $|K|=\infty$, and it follows by Weyl's principle that we can consider $a,b,c,d$ to be variables. Let us now substitute $a=-y_1,b=x_1,c=-y_2,d=x_2$ into $(\ref{gl_eq})$ and obtain:
\[
q(x_i,y_i)=q\left(\frac{f_{1,i}}{f_{1,2}},\frac{f_{2,i}}{f_{1,2}}\right).
\]
This proves that $K(\xx,\yy)^{\GL_2} \subseteq K(\frac{f_{i,j}}{f_{k,l}})$. The other inclusion is clear from our considerations before.
\end{proof}

\noindent\textbf{Idea of proof for the First Fundamental Theorem for $\SL_2(K)$:}
We will reprove and use Hodge's straightening lemma \cite{Ho43} and draw inspiration from De Concini and Procesi \cite{CoPr76}. Given an invariant polynomial $p\in K[\xx,\yy]^{\SL_2}$, one may assume that $p$ is homogeneous in the variables $x_1,\dots,x_n$, let us say of degree $m$. We then let a suitably chosen $\GL_2(K)$-matrix act on $p$ in order to show that $f_{1,2}^m\cdot p \in K[f_{ij}]$. However, from this one cannot immediately conclude that $p \in K[f_{ij}]$. The problem here is the fact that relations between the elements of this ring exist. Hence, $p$ does not admit a unique representation as a polynomial in the generators $f_{i,j}$. Therefore, we investigate first the ring $K[f_{ij}]$, study its structure, when considered as a $K$-algebra, and construct a $K$-basis. Only then we can eliminate possible relations and deduce that $p \in K[f_{ij}]$.\\

Let us now start with the investigation of the ring $K[f_{ij}]$. Any product of the form $f_{i_1,j_1}f_{i_2,j_2}\cdots f_{i_m,j_m}$ can be associated with the following diagram
\begin{align*}\begin{bmatrix}
i_1 & i_2 & \cdots & i_m\\
j_1 & j_2 & \cdots & j_m
\end{bmatrix}
\end{align*}
where $i_{k}< j_{k}$ for all $k$ (using relations (\ref{1})). If we can permute the columns of the diagram in such a way that $i_1\leq i_2\leq \dots \leq i_m$ and  $j_1\leq j_2\leq \dots\leq j_m$, the diagram becomes a standard Young tableau and the corresponding product $f_{i_1,j_1}f_{i_2,j_2}\cdots f_{i_m,j_m}$ is called a \emph{standard product}. Notice, that each product $f_{i_1,j_1}f_{i_2,j_2}\cdots f_{i_m,j_m}$ can be transformed into a sum of standard product just by applying iteratively the Plücker relation.

\begin{remark}\normalfont
If a product $f_{i_1,j_1}f_{i_2,j_2}\cdots f_{i_m,j_m}$ is divisible by some $f_{i,j}$, then after applying the Plücker relation (\ref{2}) to it we get two summands which are both divisible by $f_{i,r}$ for some $r\in\{1,\dots,n\}$ and $f_{j,s}$ for some $s\in\{1,\dots,n\}$. This is due to the fact that on both sides of the Plücker relation (\ref{2}) the set of variables appearing in each summand is the same.
\end{remark}

\begin{lemma}[Straightening lemma]
The monic standard products form a $K$-basis of $K[f_{ij}]$.
\end{lemma}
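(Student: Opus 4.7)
My plan is to prove the two defining properties of a basis separately: spanning and linear independence.

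\medskip
\noindent\textbf{Spanning.}
Every element of $K[f_{ij}]$ is a $K$-linear combination of monomials in the generators, and by the identities (\ref{1}) each such monomial equals, up to sign, a product $f_{i_1,j_1}\cdots f_{i_m,j_m}$ with $i_k<j_k$ for all $k$, which I view as a two-row tableau. I equip these tableaux with a well-founded total order by first sorting columns by top entry (breaking ties by bottom entry, an operation that leaves the product unchanged) and then comparing the bottom rows lexicographically. Spanning then reduces to a descent statement: if there is an adjacent column pair with $i_k<i_{k+1}$ and $j_k>j_{k+1}$, then one application of the Plücker relation (\ref{2}) to this pair rewrites the product as a combination of two tableaux whose bottom rows, after re-sorting columns by top entry, are both strictly smaller in the chosen order. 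A well-founded induction then rewrites every product as a $K$-linear combination of standard products.

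\medskip
\noindent\textbf{Linear independence.}
My plan is a leading-monomial argument in $K[\xx,\yy]$. Equip $K[\xx,\yy]$ with the lexicographic monomial order determined by $x_1>\cdots>x_n>y_1>\cdots>y_n$. For each $i<j$ the leading monomial of $f_{i,j}=x_iy_j-y_ix_j$ is $x_iy_j$, so the leading monomial of the standard product associated with columns $(i_k,j_k)$ is $\prod_k x_{i_k}y_{j_k}$, which records the top row as an $x$-multiset and the bottom row as a $y$-multiset. A standard tableau is determined uniquely by its pair of row multisets, since each row is their sorted enumeration; hence distinct standard products have distinct leading monomials. No non-trivial $K$-linear relation among monic standard products can then hold, because the largest leading monomial appearing in such a relation would be contributed by a unique summand and could not cancel.

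\medskip
\noindent\textbf{Main obstacle.}
The chief care goes into the descent calculation in the spanning step. In the genuinely interleaved case $i_k<i_{k+1},\ j_k>j_{k+1}$, one of the two new tableaux introduces the large index $j_{k+1}$ into the top row and must be re-slotted before the order can be compared. A direct check shows that the bottom entry at position $k$ becomes $i_{k+1}$ in the first summand and $j_{k+1}$ in the second, each strictly smaller than the original $j_k$, securing the lex descent regardless of where $j_{k+1}$ is re-inserted. The case $i_k=i_{k+1}$ needs no Plücker application, since tie-breaking by bottom entry already puts those columns in standard order. Once these observations are in place, well-foundedness of the induction is immediate.
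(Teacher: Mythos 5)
Your argument is correct and follows essentially the same route as the paper: straightening via the Plücker relation for spanning, and a leading-monomial argument for linear independence (the paper orders the variables as $x_1 \prec y_1 \prec x_2 \prec \cdots \prec y_n$ rather than grouping the $x$'s first, but either choice makes $x_iy_j$ the leading term of $f_{i,j}$ and identifies a standard product with its pair of row multisets). The only substantive difference is that you spell out the termination of the straightening procedure via the lexicographic descent of the bottom row, a point the paper asserts without proof.
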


This lemma was first proven by Hodge \cite{Ho43}, who attributes the idea to consider standard tableaux to Young \cite{Young1928}. However, Hodge's proof is lengthy and so we will provide a simpler argument. 

\begin{proof}
It follows from the Plücker relations that the monic standard products form a generating system. Consider now the monomial ordering on $\mathbb{N}^{2n}$ given by $x_1 \prec y_1 \prec x_2 \prec \cdots \prec y_n$. In this way, different standard products have different leading monomials which proves their linear independence.
\end{proof}

Hence, any $p \in K[f_{ij}]$ can be uniquely written as $p=\sum_{\alpha\in I} c_{\alpha}F_{\alpha}$, for some index set $I \subseteq (\{1,\dots,n\}\times\{1,\dots,n\})^N$ where $N \in \NN$, $c_\alpha \in K$ and the $F_{\alpha}$'s are standard products. 

\begin{lemma}\label{5}
Let $p=\sum_{\alpha\in I} c_{\alpha}F_{\alpha}$
be a $K$-linear combination of standard products $F_{\alpha}$. If for some $i$, the polynomial $p$ vanishes after the substitution $(x_i,y_i)=(0,0)$, i.e.,
\[
p|_{(x_i,y_i)=(0,0)}= 0,
\]
then each summand $F_{\alpha}$ is divisible by $f_{i,r_\alpha}$ for some $r_\alpha \in\{1,\dots,n\}.$
\end{lemma}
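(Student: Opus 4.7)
My plan is to separate the standard products $F_\alpha$ into two families according to whether they involve the index $i$ or not, and then exploit the Straightening Lemma on a suitable subring with fewer variables.

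More precisely, I would partition the index set as $I = I_1 \sqcup I_2$, where $\alpha \in I_1$ if the standard tableau associated to $F_\alpha$ contains $i$ (equivalently, $F_\alpha$ has some factor of the form $f_{i,r}$), and $\alpha \in I_2$ otherwise. For $\alpha \in I_1$, the substitution $(x_i,y_i)=(0,0)$ annihilates the factor $f_{i,r}=x_iy_r-y_ix_r$, hence $F_\alpha$ itself vanishes. For $\alpha \in I_2$, the variables $x_i, y_i$ never appear in $F_\alpha$, so the substitution leaves $F_\alpha$ unchanged. Combining these two observations with the hypothesis yields
\[
0 = p|_{(x_i,y_i)=(0,0)} = \sum_{\alpha \in I_2} c_\alpha F_\alpha.
\]

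Next, I would observe that the $F_\alpha$ with $\alpha \in I_2$ are precisely the standard products built from the generators $f_{j,k}$ with $j,k \in \{1,\dots,n\}\setminus\{i\}$, which is itself a Plücker-type ring of the same shape as $K[f_{ij}]$, just on fewer indices. The Straightening Lemma applies to this subring, so these standard products are linearly independent over $K$. Therefore each coefficient $c_\alpha$ with $\alpha \in I_2$ must vanish, leaving $p = \sum_{\alpha \in I_1} c_\alpha F_\alpha$, and by construction every $F_\alpha$ that appears is divisible by some $f_{i,r_\alpha}$, as claimed.

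The main obstacle I anticipate is making sure the Straightening Lemma can be invoked cleanly on the sub-collection of standard products not involving $i$. Concretely, one has to check that "standard product avoiding index $i$" is the same notion whether interpreted in $K[f_{ij}]$ or in the subring on $\{1,\dots,n\}\setminus\{i\}$, and that the leading monomial argument from the proof of the Straightening Lemma still produces distinct leading monomials in the smaller monomial ordering. Both points are routine in view of the explicit ordering $x_1 \prec y_1 \prec \cdots \prec y_n$ used above, but they deserve an explicit sentence so the reduction is unambiguous.
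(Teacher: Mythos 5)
Your proposal is correct and is essentially the paper's own argument: the paper also observes that standard products without a factor $f_{i,r}$ are unchanged by the substitution $(x_i,y_i)=(0,0)$ while the others vanish, and then invokes the linear independence of standard products (phrased as a contradiction rather than your direct partition $I=I_1\sqcup I_2$). Your worry about passing to the subring on $\{1,\dots,n\}\setminus\{i\}$ is unnecessary, since the surviving $F_\alpha$ are already standard products in $K[f_{ij}]$ itself and the Straightening Lemma gives their independence there directly.
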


\begin{proof}
Let us assume by contradiction that there are standard products $F_{\alpha_1},\dots,F_{\alpha_k},\alpha_j\in I$ which are not divisible by any $f_{i,r},r\in\{1,\dots,n\}.$ Notice that these standard products satisfy $F_{\alpha_j}=F_{\alpha_j}|_{(x_i,y_i)=(0,0)}$ for each $j$. Then evaluating $p$ at $(x_i,y_i)=(0,0)$ gives 
\[
0=\sum_{i=1}^k c_{\alpha_i}F_{\alpha_i},
\]
which contradicts the linear independence of standard products.
\end{proof}

\begin{lemma}\label{lem:qf12}
Let $q\in K[\xx,\yy]$ be a polynomial satisfying
\[
f_{1,2}\cdot q\in K[f_{ij}].
\] 
Then $q$ already belongs to the ring $K[f_{ij}]$.
\end{lemma}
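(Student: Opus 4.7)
The plan is to express $f_{1,2}\cdot q$ uniquely as a $K$-linear combination of standard products (via the Straightening Lemma) and show that only the standard products actually divisible by $f_{1,2}$ in $K[f_{ij}]$ can occur with nonzero coefficient. A standard product contains $f_{1,2}$ if and only if its first column is $\binom{1}{2}$, because the monotonicity constraints on both rows force every column weakly preceding an occurrence of $\binom{1}{2}$ to itself equal $\binom{1}{2}$. Granted this, cancelling $f_{1,2}$ in the domain $K[\xx,\yy]$ immediately places $q$ in $K[f_{ij}]$.

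Concretely, I would split the standard expansion as $f_{1,2}\cdot q = f_{1,2}\cdot A' + B$, where $A'\in K[f_{ij}]$ collects the (standard-product) cofactors of those $F_\alpha$ whose first column is $\binom{1}{2}$, and $B=\sum_\alpha c_\alpha F_\alpha$ collects all remaining standard products. The lemma reduces to showing $B=0$. Observe that $B = f_{1,2}(q - A')$ is divisible by $f_{1,2}$ as an element of $K[\xx,\yy]$, and thus vanishes on the hypersurface $\{f_{1,2}=0\}$.

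The key step is to exploit this vanishing by substituting $x_2 \mapsto \lambda x_1$, $y_2 \mapsto \lambda y_1$, where $\lambda$ is a fresh indeterminate. Under this substitution $f_{1,2}\mapsto 0$, $f_{2,j}\mapsto \lambda f_{1,j}$ for $j\geq 3$, and every $f_{i,j}$ with $i,j\neq 2$ is unchanged. Each standard product $F_\alpha$ occurring in $B$ (which has no $f_{1,2}$-factor) is therefore transformed into $\lambda^{k_\alpha} G_\alpha$, where $k_\alpha$ is the number of columns of $F_\alpha$ starting with $2$ and $G_\alpha$ is the result of relabeling each column $\binom{2}{j}$ to $\binom{1}{j}$. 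Comparing powers of $\lambda$ in the identity $\sum_\alpha c_\alpha \lambda^{k_\alpha} G_\alpha = 0$ yields $\sum_{k_\alpha=k}c_\alpha G_\alpha = 0$ for every $k\geq 0$.

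The main obstacle is then to show that within each fixed value of $k_\alpha$ the products $G_\alpha$ are pairwise distinct standard products, so that the Straightening Lemma forces all $c_\alpha$ to vanish. Using the standardness of $F_\alpha$—crucially, that the last $j$-value in its $1$-block does not exceed the first $j$-value in its $2$-block—one verifies that $G_\alpha$ is itself a standard product, and that $F_\alpha$ can be reconstructed from the pair $(G_\alpha,k_\alpha)$ by re-designating as $2$-columns the last $k_\alpha$ of the $1$-columns of $G_\alpha$. This injectivity together with the linear independence of standard products delivers $B=0$, and hence $q=A'\in K[f_{ij}]$.
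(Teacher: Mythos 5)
Your argument is correct, and it takes a genuinely different route from the paper's. The paper argues by contradiction: it invokes an auxiliary lemma (if a linear combination of standard products vanishes at $(x_i,y_i)=(0,0)$ then every summand has a factor $f_{i,r}$), applies the substitution $(x_1,y_1)\mapsto(x_1,y_1)+(x_2,y_2)$, and shows that the hypothetical standard products without an $f_{1,2}$-factor produce, after re-straightening the mixed terms via the Pl\"ucker relation, standard summands $F_\alpha|_{(x_1,y_1)=(x_2,y_2)}$ with no $f_{1,r}$-factor at all, contradicting that lemma. You instead argue directly: you split $f_{1,2}q=f_{1,2}A'+B$ using the observation that a standard product is divisible by $f_{1,2}$ exactly when its first column is $\binom{1}{2}$ (so the cofactors are again standard and $A'\in K[f_{ij}]$), and then kill $f_{1,2}$ by the specialization $(x_2,y_2)\mapsto\lambda(x_1,y_1)$. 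This sends each remaining $F_\alpha$ to $\lambda^{k_\alpha}G_\alpha$ with $G_\alpha$ again standard, and the decisive points --- that standardness of $F_\alpha$ forces the relabeled tableau to stay standard, and that $F_\alpha$ is recoverable from $(G_\alpha,k_\alpha)$ because its $2$-column entries must be the $k_\alpha$ largest in the merged multiset --- are exactly the ones you identify; they are routine to verify. Your version buys a direct (non-contradiction) proof that needs neither the auxiliary vanishing lemma nor the re-straightening of mixed terms, at the price of the bookkeeping in the standardness/injectivity check; both proofs ultimately rest on the same pillar, the linear independence of standard products from the Straightening Lemma.
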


\begin{proof}
Write $p=f_{1,2}\cdot q$ uniquely as a linear combination of the standard products
\[
p=\sum_{\alpha\in I} c_{\alpha}F_{\alpha}.
\]
We prove that each $F_{\alpha}$ is divisible by $f_{1,2}.$ 

Notice that $p|_{(x_1,y_1)=(0,0)}= 0$ and also $p|_{(x_2,y_2)=(0,0)}= 0$. Thus, Lemma \ref{5} applies and guarantees that each $F_{\alpha}$ is divisible by some $f_{1,r}$ and some $f_{2,s}$. We claim that one can pick $r=2$. Assume by contradiction that $F_{\alpha_1},\dots,F_{\alpha_k}$ are not divisible by $f_{1,2}$ and write
\begin{align}\label{6}
p=f_{1,2}\cdot P+\sum_{l=1}^{k}c_{\alpha_l}F_{\alpha_l}
\end{align}
for some $P\in K[f_{i,j}]$, a sum of standard products of smaller degree than the degree of $p$, and some coefficients $c_{\alpha_l}\in K$. We enlarge now the polynomial ring $K[\xx,\yy]$ to $K[\xx,\yy,\lambda]$ by adding a new variable $\lambda$ which will be used for weighting the standard products. The goal now is to carry out a suitable substitution such that, whereas the left-hand side of the equation (\ref{6}) and also the summand $f_{1,2}\cdot P$ of the right-hand side vanish, the sum $\sum_{l=1}^{k}c_{\alpha_l}F_{\alpha_l}$ transforms into another sum of standard weighted products. This would yield a contradiction.

Let $\lambda$ be a variable. We perform the substitution $^*:(x_1,y_1)\mapsto (\lambda x_2, \lambda y_2)$ under which $f_{1,k}\mapsto \lambda f_{2,k}$ for all $k\geq 2$ and $f_{i,j}\mapsto f_{i,j}$ if $i,j\neq 1$. Obviously, all standard products (and arbitrary polynomials), that are divisible by $f_{1,2}$, become zero after this substitution. Further, each standard product $F_{\alpha_l}$ with the corresponding diagram
\[
\begin{bmatrix}
1 & \cdots & 1 & 2 & \cdots & 2 & i_{b_l+1} & \cdots \quad\\
j_1 & \cdots & j_{a_l} & j_{a_l+1} & \cdots &  j_{b_l} & i_{b_l+1} & \cdots\quad
\end{bmatrix}
\]
transforms under $^*$ into the standard product $F_{\alpha_l}|_{(x_1,y_1)=(x_2,y_2)}$, with the corresponding diagram 
\[
\begin{bmatrix}
2 & \cdots & 2 & 2 & \cdots & 2 & i_{b_l+1} & \cdots \quad\\
j_1 & \cdots & j_{a_l} & j_{a_l+1} & \cdots & j_{b_l} & i_{b_l+1} & \cdots \quad
\end{bmatrix},
\]
multiplied by $\lambda^{a_l}$, where $a_l$ is the number of columns with $1$ on the top in the diagram of $F_{\alpha_l}$.  It follows that the substitution~$^*$ acts injectively on the standard products that are not divisible by $f_{1,2}$. After applying $^*$, the equality (\ref{6}) becomes
\[
0=p^*=\sum_{l=1}^kc_{\alpha_l}F^*_{\alpha_l}.
\]
Now, since the substitution $^*$ is injective on $F_{\alpha_l}$'s, we conclude $\sum_{l=1}^kc_{\alpha_l}F_{\alpha_l}=0.$
\end{proof}

We are now in a position to prove the First Fundamental Theorem for $\SL_2(K)$.

\begin{proof}[Proof of the First Fundamental Theorem for $\SL_2(K)$]
First observe that, if $p\in K[\xx,\yy]^{\SL_2}$ is an invariant polynomial, then by the linearity of the action it follows that each of its homogeneous summands must be invariant as well. Therefore, we can assume without loss of generality that $p$ itself is a homogeneous polynomial of some degree $k$.

From now on, we will refer to the polynomial $p=p(x_1,y_1,\dots,x_n,y_n) \in K[\xx,\yy]$ as $p(x_i,y_i)$ in order to shorten the notation.
Let us now consider the matrix
\[
S=\begin{pmatrix}
t & 0\\
0 & t^{-1}
\end{pmatrix},
\]
where $t\in K^*$ arbitrary. Then, as $S$ is an $\SL_2(K)$-matrix, $p$ stays invariant under its action:
\begin{align}\label{Saction}
p(x_i,y_i)=S\cdot p(x_i,y_i)=p(tx_i,t^{-1}y_i).
\end{align}
Because $|K|=\infty$ and this equality holds for all $t \neq 0$, the principle of Weyl applies and we can consider $t$ as a variable. Now, comparison of terms of equal degree on the left- and the right-hand sides of $(\ref{Saction})$ shows that each term of $p$ must contain as many variables from the set $\{x_1,\dots,x_n\}$ as from the set $\{y_1,\dots,y_n\}$, if counted with multiplicity. Hence, $p$ is not only homogeneous of an even degree $k=2m$, for some $m$, but it is also homogeneous in the variables $x_1,\dots,x_n$ of degree $m$ and also in the variables $y_1,\dots,y_n$ of the same degree.

Let us consider any invertible matrix $G\in\GL_2(K)$ and examine its action on $p$. We write the matrix in the following way:
\begin{equation*}
   G=G\cdot\begin{pmatrix}\det(G)^{-1} & 0\\ 0 & 1 \end{pmatrix} \cdot\begin{pmatrix}\det(G) & 0\\ 0 & 1 \end{pmatrix}.
\end{equation*}
Obviously, the product 
\[
G\cdot\begin{pmatrix}\det(G)^{-1} & 0\\ 0 & 1\end{pmatrix}
\]
is an element of $\SL_2(K).$ Hence, as $p$ is invariant under the action of $\SL_2(K)$, the action of $G$ on $p$ reduces to the action of 
\[
\tilde{G}\coloneqq\begin{pmatrix}\det(G) & 0\\ 0 & 1 \end{pmatrix}.
\]
We obtain the equality
\begin{align}\label{8}
    p(ax_i+by_i,cx_i+dy_i)&=G\cdot p(x_i,y_i) =\tilde{G}\cdot p(x_i,y_i)=p(\det(G)x_i,y_i)=\nonumber\\
    &= \det(G)^m p(x_i,y_i)=(ad-bc)^mp(x_i,y_i),
\end{align}
using that $p$ has degree equal to $m$ in the variables $x_1,\dots,x_n$. Here, $a,b,c,d \in K$ denote the entries of the matrix $G$. This equality holds on the Zariski open set \[\{(x_1,y_1,\dots,x_n,y_n,a,b,c,d)\in K^{2n+4}:ad-bc\neq 0\}.\] As $|K|=\infty$, Weyl's principle applies and we can consider $a,b,c,d$ as variables. After substituting $a=-y_1, b=x_1, c=-y_2, d=x_2$, the equality (\ref{8}) becomes 
\begin{align}\label{4}
     f_{1,2}^{m}\cdot p(x_i,y_i)=p\left(f_{1,i},f_{2,i}\right) \in K[\xx,\yy].
\end{align}
Now, the claim follows by Lemma \ref{lem:qf12}.
\end{proof} 

Notice that the equality (\ref{4}) also shows that the invariant polynomial $p$ belongs to the intersection $K[\xx,\yy] \cap K(f_{1i},f_{2i})$, where $K(f_{1i},f_{2i}) = K(f_{1,i},f_{2,i}: i=3,\dots,n)$. Thus, one could intuitively think that $p$ does not only lie in the polynomial ring $K[f_{ij}]$ but that it belongs already to the subring $K[f_{1i},f_{2i}] = K[f_{1,i},f_{2,i}: i=3,\dots,n]$. However, the inclusion $K[\xx,\yy] \cap K(f_{1i},f_{2i}) \subseteq K[f_{1i},f_{2i}]$ is wrong. For example, because of the Plücker relation, the polynomial $f_{3,4}$ can be written as
\[
f_{3,4}  = \frac{f_{1,3}f_{2,4} - f_{1,4}f_{2,3}}{f_{1,2}}.
\]
Therefore, $f_{3,4}$ is obviously an element of $K[\xx,\yy]\cap K(f_{1i},f_{2i})$ but there is no reason for it to be contained in $K[f_{1i},f_{2i}]$. In fact, we can easily show the following equality first observed by Igusa in \cite[Thm. 3]{Ig54}:

\begin{corollary}[Igusa]
It holds that $K(f_{ij})\cap K[\xx,\yy]=K[f_{ij}]$.
\end{corollary}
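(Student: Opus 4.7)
The plan is to obtain this as a direct consequence of combining the two First Fundamental Theorems just established. The nontrivial inclusion is $K(f_{ij})\cap K[\xx,\yy]\subseteq K[f_{ij}]$, and the reverse inclusion is immediate since every $f_{i,j}$ is simultaneously a polynomial in $K[\xx,\yy]$ and an element of $K(f_{ij})$.

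To prove the nontrivial direction, I would take an arbitrary $p\in K(f_{ij})\cap K[\xx,\yy]$ and proceed in two short steps. First, since by the First Fundamental Theorem for $\GL_2(K)$ we have $K(f_{ij})=K(\xx,\yy)^{\GL_2}$, the element $p$, viewed as a rational function, is invariant under the action of every matrix in $\GL_2(K)$. In particular, $p$ is invariant under the action of every matrix in $\SL_2(K)\subset\GL_2(K)$. Second, because $p$ was assumed to lie in $K[\xx,\yy]$, this $\SL_2(K)$-invariance places $p$ in $K[\xx,\yy]^{\SL_2}$. Applying the First Fundamental Theorem for $\SL_2(K)$, which gives $K[\xx,\yy]^{\SL_2}=K[f_{ij}]$, yields $p\in K[f_{ij}]$, as required.

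There is essentially no obstacle in this argument beyond the two FFTs themselves, which have already been proven above. The only thing to verify is the logical compatibility of the two invariance notions on the intersection $K(f_{ij})\cap K[\xx,\yy]$, and this is clear because the actions of $\GL_2(K)$ and $\SL_2(K)$ on $K(\xx,\yy)$ restrict naturally to the corresponding actions on $K[\xx,\yy]$ whenever the rational function happens to be a polynomial. Thus the corollary follows in a few lines.
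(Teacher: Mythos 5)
Your proposal is correct and follows essentially the same route as the paper: both pass from membership in $K(f_{ij})$ to $\SL_2(K)$-invariance, intersect with $K[\xx,\yy]$ to land in $K[\xx,\yy]^{\SL_2}$, and then invoke the First Fundamental Theorem for $\SL_2(K)$. The only cosmetic difference is that you route the first step through the $\GL_2(K)$ theorem, whereas the easy inclusion $K(f_{ij})\subseteq K(\xx,\yy)^{\SL_2}$ already follows from the semi-invariance of the generators $f_{i,j}$; this does not affect correctness.
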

 
\begin{proof}
    The First Fundamental Theorem ensures that $K(f_{ij})\cap K[\xx,\yy] \subseteq K[\xx,\yy]^{\SL_2} = K[f_{ij}]$. The other inclusion is obvious.
\end{proof} 

\begin{remark}\normalfont
Note that the statements of the First Fundamental Theorems for $\GL_2(K)$ and $\SL_2(K)$ are wrong for any finite field $K$. In fact, for any prime power~$q$, the polynomials
\begin{align*}
    p_i \coloneqq x_i^qy_i - x_iy_i^q, \quad  1\leq i\leq n,
\end{align*}
are semi-invariant under the action of $\GL_2(\mathbb{F}_q)$, i.e., we have 
\begin{align}\label{character}
G\cdot p_i = \det(G) p_i.
\end{align}
Therefore, it follows that $p_i \in \mathbb{F}_q[\xx,\yy]^{\SL_2}$
and
$p_i/p_j \in\mathbb{F}_q(\xx,\yy)^{\GL_2}$, for $i\neq j$. However, it is obvious that $p_i \not \in \mathbb{F}_q[f_{ij}]$, showing that $\mathbb{F}_q(f_{ij})  \subsetneq \mathbb{F}_q(\xx,\yy)^{\GL_2}$ and $\mathbb{F}_q[f_{ij}]  \subsetneq \mathbb{F}_q[\xx,\yy]^{\SL_2}$. 
\end{remark}

\medskip\noindent {\bf Acknowledgements.} We are grateful to Michel Brion, Hanspeter Kraft and Herwig Hauser for their valuable discussions and corrections of the first version of this text. The authors also thank Josef Schicho for his great pedagogical introduction to invariant theory at the workshop in Wesenufer, Austria. Finally, we are deeply indebted to Alin Bostan for his time, ideas and comments. \\
The first author was supported by Austrian Academy of Sciences, ÖAW, Doc Stipendium FA506081 and the second author was supported by Austrian Science Fund, FWF, Project AP31338.

\sloppy

\bibliographystyle{alphaabbr}
\bibliography{bib}

\begin{thebibliography}{DRS74}

\bibitem[BB86]{BaBr86}
M.~Barnabei and A.~Brini.
\newblock An elementary proof of the first fundamental theorem of vector
  invariant theory.
\newblock {\em J. Algebra}, 102(2):556--563, 1986.

\bibitem[Cle70]{Clebsch1870}
A.~Clebsch.
\newblock Zur {T}heorie der bin\"{a}ren algebraischen {F}ormen.
\newblock {\em Math. Ann.}, 3(2):265--267, 1870.

\bibitem[Dol03]{Dolgachev03}
I.~Dolgachev.
\newblock {\em Lectures on invariant theory}, volume 296 of {\em London
  Mathematical Society Lecture Note Series}.
\newblock Cambridge University Press, Cambridge, 2003.

\bibitem[DP76]{CoPr76}
C.~{De Concini} and C.~Procesi.
\newblock A characteristic free approach to invariant theory.
\newblock {\em Advances in Mathematics}, 21(3):330 -- 354, 1976.

\bibitem[DRS74]{DoRoSt74}
P.~Doubilet, G.-C. Rota, and J.~Stein.
\newblock On the foundations of combinatorial theory. {IX}. {C}ombinatorial
  methods in invariant theory.
\newblock {\em Studies in Appl. Math.}, 53:185--216, 1974.

\bibitem[GY04]{GrYo04}
J.~Grace and A.~Young.
\newblock Literaturberichte: {T}he algebra of invariants.
\newblock {\em Monatsh. Math. Phys.}, 15(1):A55--A56, 1904.
\newblock Von J. H. Grace und A. Young. Cambridge: At the University Press
  1903.

\bibitem[Hod43]{Ho43}
W.~V.~D. Hodge.
\newblock Some Enumerative Results in the Theory of Forms.
\newblock {\em Mathematical Proceedings of the Cambridge Philosophical
  Society}, 39(1):22–30, 1943.

\bibitem[Igu54]{Ig54}
J.-i. Igusa.
\newblock On the arithmetic normality of the {G}rassmann variety.
\newblock {\em Proc. Nat. Acad. Sci. U.S.A.}, 40:309--313, 1954.

\bibitem[KP96]{KrPr96}
H.~Kraft and C.~Procesi.
\newblock Classical Invariant Theory. A Primer, 1996.

\bibitem[Ric89]{Richman89}
D.~R. Richman.
\newblock The fundamental theorems of vector invariants.
\newblock {\em Adv. in Math.}, 73(1):43--78, 1989.

\bibitem[Stu08]{Sturmfels08}
B.~Sturmfels.
\newblock {\em Algorithms in invariant theory}.
\newblock Texts and Monographs in Symbolic Computation. SpringerWienNewYork,
  Vienna, second edition, 2008.

\bibitem[SW89]{StWh89}
B.~Sturmfels and N.~White.
\newblock Gr\"{o}bner bases and invariant theory.
\newblock {\em Adv. Math.}, 76(2):245--259, 1989.

\bibitem[Tur29]{Turnbull60}
H.~W. Turnbull.
\newblock {\em The theory of determinants, matrices, and invariants}.
\newblock 1st ed. Dover Publications, Inc., New York, 1929.

\bibitem[Wey39]{Weyl39}
H.~Weyl.
\newblock {\em The {C}lassical {G}roups. {T}heir {I}nvariants and
  {R}epresentations}.
\newblock Princeton University Press, Princeton, N.J., 1939.

\bibitem[You28]{Young1928}
A.~Young.
\newblock On {Q}uantitative {S}ubstitutional {A}nalysis.
\newblock {\em Proc. London Math. Soc. (2)}, 28(4):255--292, 1928.

\end{thebibliography}

\end{document}